
\documentclass[letterpaper, 10 pt, conference]{ieeeconf}  

\IEEEoverridecommandlockouts                              
\overrideIEEEmargins

\usepackage{graphics} 
\usepackage{epsfig} 
\usepackage{mathptmx} 
\usepackage{times} 
\usepackage{amsmath} 
\usepackage{amssymb}  
\usepackage{xcolor}

\newcommand{\at}{\textup{ArcTanh }} 

\newtheorem{proposition}{Proposition}
\newtheorem{theorem}{Theorem}
\newtheorem{lemma}{Lemma}
\newtheorem{remark}{Remark}

\title{\LARGE \bf
Lockdown interventions in SIR models:\\ Is the reproduction number the right control variable?
}


\author{Leonardo Cianfanelli$^{1,*}$, Francesca Parise$^{2,*}$, Daron Acemoglu$^{3}$, Giacomo Como$^{1,4}$, Asuman Ozdaglar$^5$
\thanks{$^1$Dipartimento di Scienze Matematiche, Politecnico di Torino}%
\thanks{$^2$Department of Electrical and Computer Engineering, Cornell University}
\thanks{$^3$Department of Economics, Massachusetts Institute of Technology, MIT}
\thanks{$^4$Department of Automatic Control, Lund University}
\thanks{$^5$Department of Electrical Engineering and Computer Science, MIT}
\thanks{$^*$First two authors contributed equally, remaining ones are listed in alphabetical order.}%
\thanks{\it{leonardo.cianfanelli@polito.it, fp264@cornell.edu, daron@mit.edu, giacomo.como@polito.it, asuman@mit.edu}}}

\begin{document}

\maketitle
\thispagestyle{empty}
\pagestyle{empty}

\begin{abstract}
The recent COVID-19 pandemic highlighted the need of non-pharmaceutical interventions in the first stages of a pandemic. Among these, lockdown policies proved unavoidable yet extremely costly from an economic perspective. To better understand the tradeoffs between economic and epidemic costs of lockdown interventions, we here focus on a simple SIR epidemic model and study lockdowns as solutions to an optimal control problem. We first show numerically that the optimal lockdown policy exhibits a phase transition from \emph{suppression} to \emph{mitigation} as the time horizon grows, i.e., if the horizon is short the optimal strategy is to impose severe lockdown to avoid diffusion of the infection, whereas if the horizon is long the optimal control steers the system to herd immunity to reduce economic loss. We then consider two alternative policies, motivated by government responses to the COVID-19 pandemic, where lockdown levels are selected to either stabilize the reproduction number (i.e., ``flatten the curve'') or the fraction of infected (i.e., containing the number of hospitalizations). We compute analytically the performance of these two feedback policies and compare them to the optimal control. Interestingly, we show that in the limit of infinite horizon stabilizing the number of infected is preferable to controlling the reproduction number, and in fact yields close to optimal performance.

\end{abstract}

\section{INTRODUCTION}
The COVID-19 pandemic has revealed the need for non-pharmaceutical interventions such as lockdowns to mitigate the spread of novel diseases, when cures and vaccines are not available. Unfortunately, while very effective, such interventions are very costly from an economic perspective. A wide variety of strategies were adopted all over the world, implementing different tradeoffs between economic and epidemic cost. In general, this class of epidemic control strategies can be classified in two main categories: \emph{suppression strategies}, which “aim to reverse epidemic growth, reducing case numbers to low levels and maintaining that situation indefinitely", or
\emph{mitigation strategies}, which “focus on slowing but not necessarily stopping epidemic spread, reducing peak healthcare demand" while 
minimizing the negative effects on economic activities \cite{ferguson2020report}.

We here aim at understanding better the trade-off in terms of economic and epidemic cost  of lockdown strategies on epidemic spread within a standard susceptible-infected-recovered (SIR) model \cite{kermack1927contribution}. We study the optimal lockdown from an optimal control perspective as first analyzed in \cite{behncke2000optimal}, with an objective that takes into account the epidemic cost associated with deaths and the economic cost due to lockdowns, as suggested in \cite{alvarez2020simple,acemoglu2020multi}. The cost takes into account hospital congestion, which appeared to be a key feature of the recent pandemic, by assuming that the lethality due to the disease is an increasing affine function of the fraction of infected $i$,  leading to a quadratic mortality rate in $i$. To the best of our knowledge, no analytical solutions to optimal control problems with quadratic cost in $i$ are provided in the literature \cite{nowzari2016analysis}. While in \cite{alvarez2020simple,acemoglu2020multi} numerical solutions are provided, in \cite{kruse2020optimal} authors show that if the cost is linear in the infected, i.e., there is no hospital congestion, and convex in the intervention, the optimal policy is quasi-convex. In  \cite{miclo2020optimal} the authors consider the problem of minimizing the cost of the intervention subject to the constraint that the fraction of infected is below a certain threshold, i.e., $i(t) \le \overline{i}$ for every time $t$,  where the value of $\overline{i}$ is based on hospitals capacity. For the case of infinite time horizon $T$, the authors prove analytically that in a first phase it is optimal to let the epidemic spread, until $i(t)=\overline{i}$; then the optimal lockdown is such that the infected remain constant at the threshold, and is released when herd immunity is reached, i.e., when the fraction of people that contracted the disease is large enough to guarantee that, from there on, the number of infected people will decrease even without lockdown. While \cite{miclo2020optimal} handle hospital congestion by a hard constraint on the simultaneous fraction of infected, without any cost on the number of deaths, our formulation considers a quadratic cost in $i$ without hard constraints. A feedback control with a hard constraint of the simultaneous fraction of infected is also proposed in \cite{di2020covid}. Optimal control approaches to containment of COVID-19 pandemic using more detailed compartmental models are also studied numerically in \cite{djidjou2020optimal,kantner2020beyond}.

The contribution of our work is two-fold. We first show numerically that as $T$ increases and the trade-off between economic and epidemic cost vary, the optimal control captures the fundamental difference between suppression and mitigation, exhibiting a phase transition between solutions that avoid diffusion of the disease, and solutions that steer the system to herd immunity while maintaining the number of infected low to mitigate the negative effects of hospital congestion. Our second main contribution is to consider two simple feedback policies, where lockdown levels are selected to either stabilize the reproduction number $R(t)$ (i.e., reduce the number of secondary infections to  ``flatten the curve'') or the fraction of infected (i.e., containing the number of hospitalizations). We note that the policy stabilizing $i(t)$ resembles the optimal control in \cite{miclo2020optimal}, with the only difference that in \cite{miclo2020optimal} the threshold depends on hospitals capacity, while in our formulation the threshold is a free parameter that can be optimized to reduce the cost. We analytically compute the exact cost of these policies, and provide an upper bound to the performance gap between the two policies and the optimal control. In particular, we show that if $T$ is infinite, controlling $i(t)$ is better than controlling $R(t)$ and is close to optimal. Together with its close-to-optimal performance, the policy that controls the number of infected is in feedback form and easy to interpret, in contrast with numerical solutions of optimal control. As a corollary, our result shows that the spectral control approach  (see e.g., \cite{nowzari2016analysis, birge2020controlling}), where typically one aims at selecting the minimal intervention guaranteeing $R(t)<1$ is not optimal in the long run when i) the cost of the intervention has to be paid repeatedly in time (e.g., lockdown are different than vaccinations since if lockdown are interrupted the reproduction number will immediately go back to above 1, while after a vaccination campaign,  assuming permanent immunization, the  system will be stable forever on) and ii)  the horizon is long or the economic cost associated with the intervention is comparable with the epidemic cost. We remark that our theoretical analysis is restricted to the case of infinite time horizon and does not consider pharmaceutical interventions that may be implemented when cures and vaccines become available. Furthermore, we assume that individuals who contracted the infection become immune from there one. We emphasize that our purpose is not to suggest specific control strategies for the COVID-19 pandemic, but to provide general insights and mathematical contribution on the control of epidemics.
\section{Model and problem formulation}
\label{sec:model}
To capture the effect of lockdown interventions, we consider the following simple SIR model introduced in \cite{alvarez2020simple}:
\begin{equation}\label{eq:sir}
	\begin{cases}
		\dot{s}=-\beta {(1-\theta L)^2}si\\
		\dot{i}=\beta{(1-\theta L)^2} si-\gamma i\\
		\dot{r}=\gamma i,
	\end{cases}
\end{equation}
where $s/i/r$ is the fraction of susceptible/infected/recovered agents, with initial condition $s_0/i_0/r_0$, $\beta$ is the infection rate, $\gamma$ is the recovery rate, $L \in [0,\overline{L}]$ is the percentage of population in lockdown and $\theta$ is the effectiveness of the lockdown (i.e. even if $L=1$ only $\theta$ percent of the population actually complies with the lockdown) and the dependence of $s, i, r, L$ on $t$ is omitted for convenience of notation. While the transmission mechanisms of the SARS-CoV-2 are very complicated (see e.g. \cite{sun2021transmission}), we assume that the lockdown $L$ enters quadratically in the dynamics, because people get the disease through pairwise interactions, (see \cite{alvarez2020simple,acemoglu2020multi}). When considering lockdown policies, one needs to consider not only the epidemics dynamics, but also the economic cost associated with such interventions. To this end, the cost of the lockdown policy can be divided into an \textit{economic cost} (due to loss of workforce because of the lockdown) which we model as
$ cost_{eco}(t)=L(t)$,
and an  \textit{epidemic cost} (related to number of fatalities) which following \cite{alvarez2020simple} and \cite{acemoglu2020multi} we model as 
$ cost_{epi}(t)= \kappa \gamma_d(i(t))i(t),$
where $\gamma_d(i(t))i(t)$ is the mortality rate and $\kappa$ is the trade-off between economic and epidemic cost. We assume that the mortality  $\gamma_d(i)=\gamma_0+\gamma_1 i$ is increasing in $i$ to take into account hospitals congestion, as proposed in \cite{alvarez2020simple,acemoglu2020multi}.
\subsection*{Reproduction number}
The behaviour of the SIR dynamics is characterized by the reproduction number $R(t):=\beta s(t) /\gamma$, which is the number of secondary infections that an infected person produces on average. If $R(t)<1$, the epidemic is stable in the sense that the number of infected decreases indefinitely, while if $R(t)>1$ the number of infected increases until the first time $t^*$ such that $R(t^*)=1$. In presence of control, i.e., when $L(t)>0$, the infection rate and therefore the reproduction number are reduced. We let $R_L(t):=\beta(1-\theta L)^2/\gamma$ denote the \emph{controlled reproduction number} and $R(t)$ denote the \emph{uncontrolled reproduction number}.
\subsection*{Optimal control}
Let $c(t):=cost_{eco}(t)+cost_{epi}(t)$.
The objective of the planner is to find the optimal control $L^*(t)$ solving
\begin{equation}\label{eq:cost}
	\begin{aligned}
	L^* = \underset{L:[0,T] \to [0,\overline{L}]}{\arg\min} & \quad  \int_0^T c(t) dt \\
	\textup{s.t.} & \quad \textup{dynamics in }\eqref{eq:sir},
	\end{aligned}
\end{equation}
where $T$ is the time horizon, 
We start by noting that, using similar techniques as in \cite{behncke2000optimal}, one can show that the optimal lockdown $L^*(t)$ solution to the optimal control problem \eqref{eq:cost} is an increasing function of the force of infection $f:=\beta s i$. It is remarkable to note that, although the stability of the system does not depend on $i$, the lockdown policy does, through $f$. Thus, even if the system is highly unstable (i.e., 
$R(t)=\beta s(t)/\gamma \gg 1$) if $i(t) \rightarrow 0$ then $L^*(t) \rightarrow 0$, which means that the optimal lockdown lets the infected grow when the number of infected is low. 
Besides this characterization, an analytic solution to \eqref{eq:cost} remains open. Numerical solutions to optimal control problems such as \eqref{eq:cost} have been investigated, but typically suffer from lack of interpretability and robustness guarantees, as they are not in feedback form, i.e., the policy does not depend explicitly on the state. As an alternative, in the next section we propose two simple feedback policies based on controlling the reproduction number (which is typically considered in spectral control \cite{nowzari2016analysis}) or controlling the total number of infected (to reduce congestion of the health system). 
These strategies, or their combination, have informed the response to COVID-19 of governments and public/private entities (e.g., Cornell University \cite{cornell}, USA \cite{USA} and Italy \cite{Italy}).


\section{Two feedback policies}\label{sec:heuristic}
We here describe the two feedback policies. Their performance are studied in Section \ref{sec:results}. To simplify the exposition we set $\theta=1$ and $\overline{L}=1$, but  all results can be  generalized.
\subsection{Policy A: Controlling reproduction number} 
This policy aims at controlling the reproduction number, as typically considered in spectral control (see e.g. \cite{nowzari2016analysis}). Given a target reproduction number $\rho$, if the uncontrolled reproduction number $R(t)$ is greater than $\rho$, i.e. $\beta s(t) /\gamma > \rho$, a lockdown level $L_\rho(t)>0$ such that $R_L(t)=\rho$ is imposed, leading to $L_\rho$ satisfying 
	\begin{equation}\label{eq:r0_const}
		R_L(t)= \frac{\beta}{\gamma}(1-L_\rho(t))^2s(t) =\rho.
	\end{equation}
If instead $R(t)\le \rho$, then $L_\rho(t)=0$.\\
   We can divide the dynamics in two phases:
	\begin{enumerate}
		\item Phase I: in $[0,t^*)$ with $t^*$ such that $s(t^*)=\gamma \rho / \beta$. In this phase $\beta  s /\gamma\ge \rho $, $L_\rho$ is as in \eqref{eq:r0_const} and the dynamics are 
		\begin{equation*}
			\label{eq:part1rho}
				\dot{s}=-\rho \gamma i, \quad
				\dot{i}=\rho \gamma i -\gamma i = (\rho-1)\gamma i, \quad
				\dot{r}=\gamma i.
		\end{equation*}
		\item Phase II: in $[t^*,T]$. Since $R(t)\le\rho$, $L_\rho=0$, the dynamics is uncontrolled and given by
	\begin{equation*}
			\dot{s}=-\beta  si, \quad
			\dot{i}=\beta si -\gamma i, \quad
			\dot{r}=\gamma i.
	\end{equation*}
	\end{enumerate}
	Fig. \ref{fig:control_R0} illustrates the dynamics under Policy A.
	Notice that Phase I may not exist if $\beta  s_0 /\gamma \le \rho$, and Phase II may not exist if $\beta s(T)/\gamma>\rho$ (this scenario typically occurs when $\rho<1$ because the infected decay exponentially and the disease does not spread). Policy A is parametric in $\rho$, thus a policy-maker that adopts this policy should select $\rho^*$ to minimize the total cost (see Fig. \ref{fig:control_R0}), leading to
	\begin{equation*}\label{eq:r}
	\begin{aligned}
		\rho^*:=\arg\min_\rho & \quad \int_0^T c(t) dt \\
		s.t & \quad  \quad \eqref{eq:sir} \textup{ with } L(t)=L_\rho(t).
	\end{aligned}
	\end{equation*}
\begin{figure}
	\centering
\includegraphics[width=4.2cm]{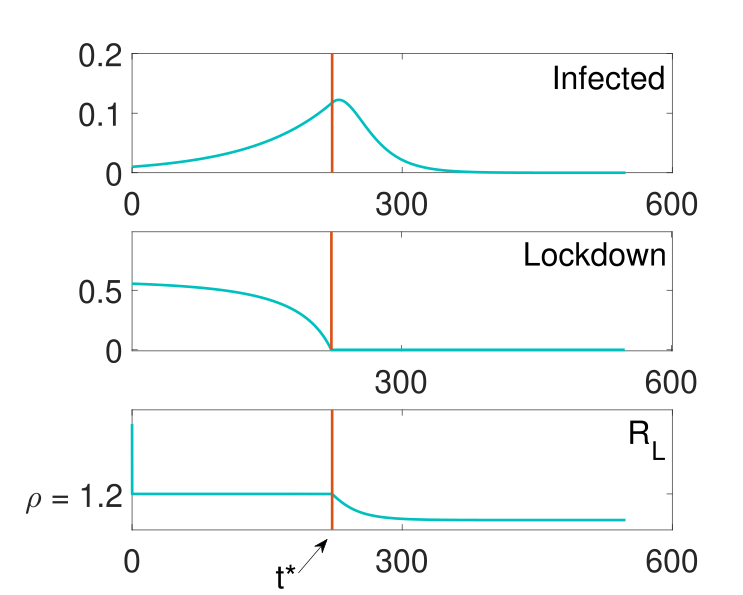}
\includegraphics[width=4.2cm]{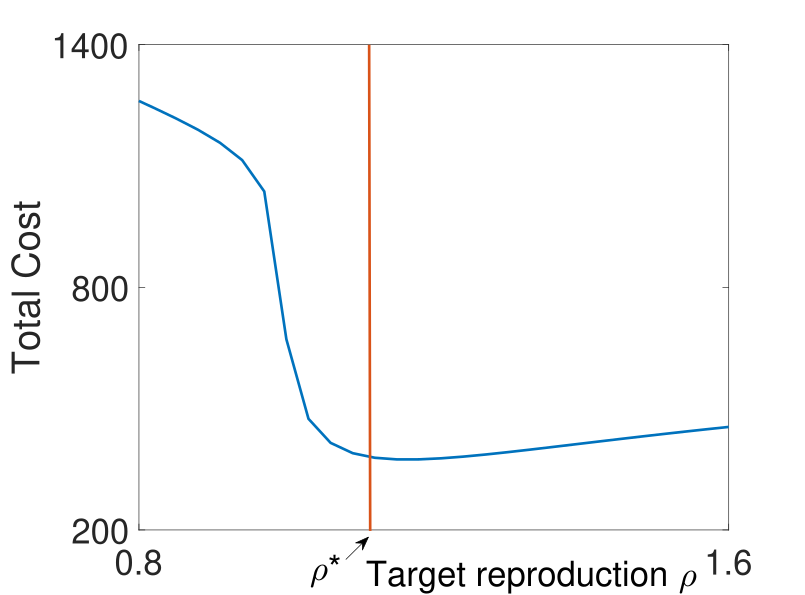}
\caption{Policy A. \emph{Left}: Dynamics with $\rho=1.2$. Phase I ($L_\rho(t)>0$) lasts until $\beta s /\gamma = \rho$ (red line); then, the trajectory becomes uncontrolled (Phase II). Infected grows until the reproduction number becomes less than 1. The unit of time is one day. \emph{Right}: optimization over $\rho$. The parameters used in this simulation, and are $\beta=0.2$, $\gamma=1/18$, $\gamma_0=5.6 \cdot 10^{-4}$, $\gamma_1=5.6 \cdot 10^{-3}$ based on the COVID-19 pandemic, see \cite{alvarez2020simple, acemoglu2020multi}), $s_0=0.98$, $i_0=0.01$, $r_0=0.01$, $\kappa=40\cdot365$, $T=5$ years. For the used parameters $\rho^*>1$, meaning that the total cost is minimized with a mitigation policy that lets the infection spread in a controlled way. \label{fig:control_R0}}
\end{figure}
\subsection{Policy B: controlling fraction of infected}
As anticipated in Section \ref{sec:model}, optimality conditions suggest that the optimal quantity to be controlled is the force of infection \cite{behncke2000optimal}. Stabilizing this quantity, i.e., imposing lockdown $L_f(t)$ such that that the $\beta(1- L_f(t))^2 s(t) i(t) = \phi$ yields the dynamics
\begin{equation*}
	\label{eq:dyn_force}
	\dot{s}=-\phi, \quad
	\dot{i}=\phi -\gamma i, \quad
	\dot{r}=\gamma i,
\end{equation*}
which stabilizes asymptotically $i(t)$ to $\phi/\gamma$. Based on this observation, instead of controlling the force of infection, we here propose a policy that stabilizes the number of infected $i(t)$ (as the optimal policy in \cite{miclo2020optimal}), whose meaning is more intuitive. Given a target value $\iota$, policy B works as follows: if $i_0<\iota$, $L_\iota(t)=0$ until the first time $\tau_1$ such that $i(\tau_1)=\iota$; if $i_0>\iota$, $L_\iota(t)=1$ until $\tau_1$; then, impose $L_\iota(t)$ in such a way that $\dot{i}(t)=0$, hence $i(t)=\iota$, until time $\tau_2$ such that herd immunity is reached, i.e., $s(\tau_2)=\gamma/\beta$; finally, the lockdown is released, i.e., $L_\iota(t)=0$ (see Fig. \ref{fig:control_i}). Given $\iota$, we divide the trajectory in three phases:
\begin{enumerate}
	\item Phase I in $[0,\tau_1)$, with $\tau_1$ being the smallest time such that $i(\tau_1)=\iota$. If $i_0 < \iota$  $L_\iota(t)=0$ and the dynamics is
	\begin{equation*}
		\dot{s}=-\beta s i, \quad
		\dot{i}=\beta s i -\gamma i, \quad
		\dot{r}=\gamma i,
\end{equation*}
If $i_0 > \iota$, $L_\iota(t)=1$, with dynamics
		\begin{equation*}
		\dot{s}=0, \quad
		\dot{i}= -\gamma i, \quad
		\dot{r}=\gamma i.
\end{equation*}
	\item Phase II in $[\tau_1,\tau_2)$, with $\tau_2$ such that $s(\tau_2)=\gamma/\beta$. In this phase $i(t)=\iota$, and the dynamics is
	\begin{equation*}
		\dot{s}=-\gamma \iota, \quad
		\dot{i}=0, \quad
		\dot{r}=\gamma \iota.
\end{equation*}
with 
\begin{equation*}
	L_\iota(t) \ \text{s.t.}\ \beta s(t) (1-L_\iota(t))^2 = \gamma.
\end{equation*}
	\item Phase III in $[\tau_2,T]$: the dynamics is uncontrolled and given by
	\begin{equation*}
		\dot{s}=-\beta s i, \quad
		\dot{i}=\beta s i -\gamma i,  \quad
		\dot{r}=\gamma i.
	\end{equation*}
\end{enumerate}
Notice that in Phase III $\dot{i}(t)<0$ even without control because herd immunity is reached.
In contrast with \cite{miclo2020optimal}, where the value of $\iota$ depends on the ICU capacity and the policy-maker has to choose the optimal policy complying with the constraint $i(t)\le \iota$, here for any value of $\iota$ the policy is given, and the policy-maker should select $\iota^*$ to minimize the total cost \eqref{eq:cost} (see Fig. \ref{fig:control_i}), leading to
\begin{equation*}\label{eq:i}
	\begin{aligned}
		\iota^*:=\arg\min_\iota & \quad \int_0^T c(t) dt \\
		s.t & \quad  \quad \eqref{eq:sir} \textup{ with } L(t)=L_\iota(t).
	\end{aligned}
\end{equation*}
\begin{figure}
	\centering
	\includegraphics[width=4.2cm]{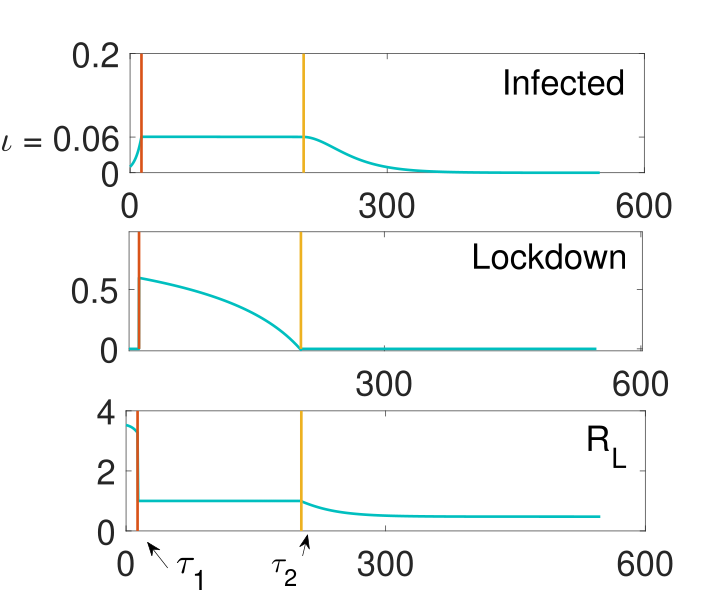}
	\includegraphics[width=4.2cm]{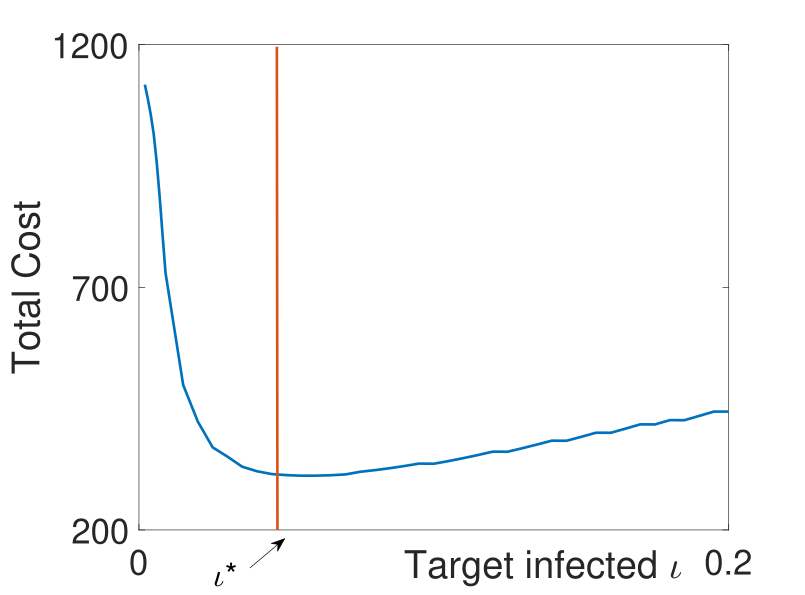}
	\caption{Policy B. \emph{Left}: Dynamics with $\iota=0.06$. In Phase I the trajectory is uncontrolled ($L(t)>0$) and the fraction of infected grows (Phase I); when $i(t)$ reaches the threshold $\iota$, lockdown $L(t)$ is imposed in such a way to maintain the fraction of infected constant (Phase II). When herd immunity is reached, the lockdown is released and infected start decreasing (Phase III). \emph{Right}: Optimization over $\iota$. The parameters are as in Fig. \ref{fig:control_R0}. \label{fig:control_i}}
\end{figure}
\section{Results}\label{sec:results}
This section is divided in two parts. In the first part, some observations on numerical solutions are presented. In the second part, we provide an upper bound on the performance gap between the feedback policies presented in Section \ref{sec:heuristic} and the optimal control \eqref{eq:cost}, and show that, some under assumptions, controlling the fraction of infected is close to optimal and outperforms the policy based on controlling the reproduction number.
\subsection{Numerical results}
\begin{figure}
	\centering
	\includegraphics[width=4.2cm]{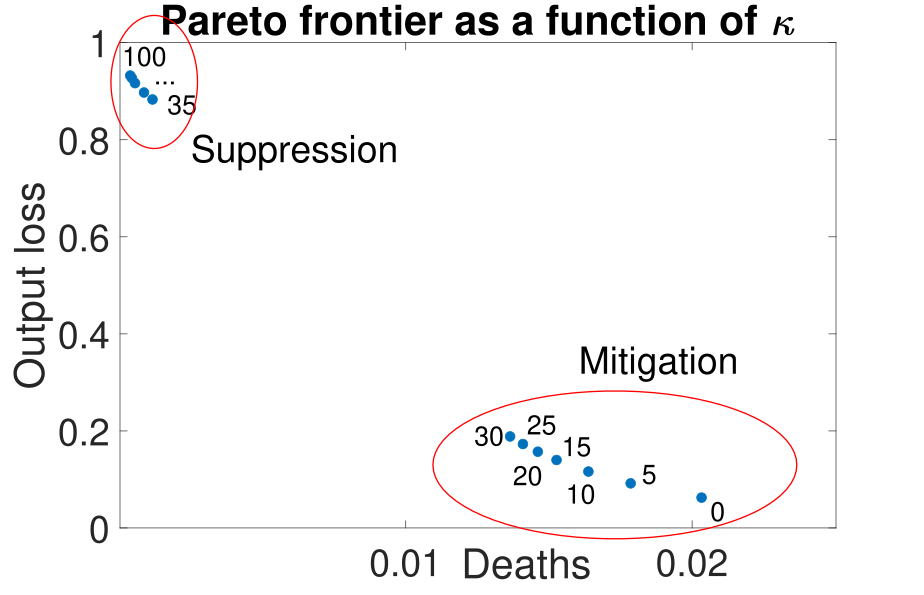}
	\includegraphics[width=4.2cm]{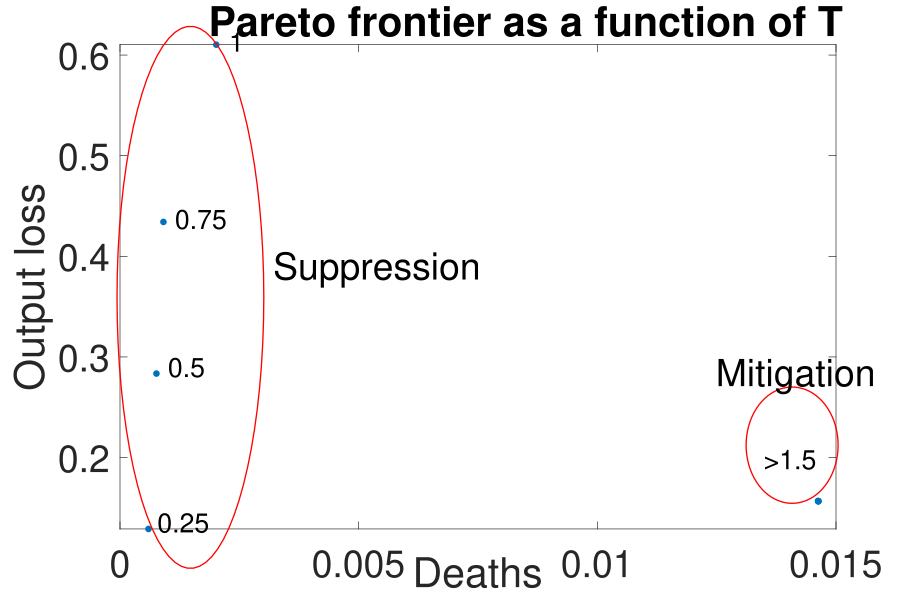}
	\caption{\emph{Left}: Economic loss, defined as $\frac{\int_0^T L(t)dt}{T} $, vs epidemic cost, i.e., fraction of population died because of the disease under the optimal control, as the trade-off between economic and epidemics cost $\kappa$ vary ($T=1.5$ yrs). \emph{Right}: same with $T$ varying, ($\kappa=20 \cdot 365$), and economic cost defined as $\frac{\int_0^T L(t)dt}{1.5 \text{yrs}}$. Other parameters are as in Fig. \ref{fig:control_R0}.}
		\label{fig:phase}
\end{figure}
Our first result is that numerical solutions of the optimal control exhibit a phase transition from suppression to mitigation, as described in \cite{ferguson2020report}, as $T$ increases and $\kappa$ decreases. In the left panel of Fig. \ref{fig:phase}, economic and epidemic cost as their trade-off $\kappa$ vary are computed, while other parameters (including $T$) are fixed. There is a clear phase transition: when $\kappa$ is small, the optimal policy lets the epidemic spread (high number of deaths, small economic cost), whereas when $\kappa$ is large optimal policy maintains the lockdown until the horizon (high economic cost and small epidemic cost). Solutions in between these two strategies are never optimal. The right plot shows that a similar phase transition occurs with $\kappa$ constant and varying $T$. Despite the economic and epidemic cost exhibiting a phase transition, we emphasize that the total cost is continuous in the parameters.
\begin{remark}
	We also observe that similar phase transition occurs for Policies A and B, namely if $\kappa$ is large and $T$ is small, the optimal reproduction number for Policy A satisfies $\rho^*<1$, implying that $i(t)$ decreases, and under Policy B the optimal $\iota^*$ is very small, whereas, if $\kappa$ is small and $T$ is large, the optimal reproduction number satisfies $\rho^*>1$, the optimal $\iota^*$ is much larger, and the epidemic cost is in general comparable with the economic cost.
\end{remark}\medskip
This confirms our intuition that solutions based on spectral control problems, that minimize statically the cost of intervention subject to the constraint $R_0<1$, are suboptimal from an optimal control perspective when $T$ is large or  $\kappa$ is not too high, since stabilizing the system indefinitely leads to a prohibitively
 high economic cost. As shown in Fig. \ref{fig:suppression_mitigation}, in the range of parameters that make the optimal control select suppression strategies, the optimal control and the feedback policies achieve similar performances, whereas when the optimal control selects mitigation, Policy B is close to optimal and outperforms Policy A. We next investigate these observations.
\subsection{Theoretical results}
\label{sec:results}

\begin{table*}	\caption{Analytic expressions of the cost under the two heuristics}
\vspace{-0.3cm}\hrule
	\begin{equation}\label{eq:t*}
			\begin{aligned}
			&C_A(\rho)
			=(t^*-T_1)+\frac{\kappa \gamma_0}{\gamma} (r(\infty)-r_0)
				+\frac{\kappa \gamma_1 }{2(\rho-1)\gamma} \left(i(t^*)^2 -i(0)^2\right)
				+\kappa\gamma_1\bigg(\gamma\frac{[\ln(s(t^*))]^2-[\ln(s(\infty))]^2}{2\beta^2}-\frac{s(t^*)-s(\infty)}{\beta} +\big(1-r(t^*)-\frac{\gamma}{\beta}\ln (s(t^*))\big)\frac{r(\infty)-r(t^*)}{\gamma}\bigg),\\
			&	\textup{with}\quad  t^*=\frac{ ln\left( 1 - \frac{\rho\gamma-\beta s_0 }{\beta\nu i_0}\right)}{(\rho-1)\gamma}, \ 
	\nu = \frac{\rho}{\rho+1}, \ \tilde{\nu}=(s_0+\nu i_0), \
	s(t^*)=\frac{\gamma \rho}{\beta},  \ r(t^*)=1-\frac{s(t^*) + \tilde{\nu}}{\nu},\\&\qquad
	T_1=2\sqrt{\frac{\rho }{\beta \tilde{\nu}(\rho-1)^2\gamma }}  { \left(     \at\left(\sqrt{\frac{s_0}{\tilde{\nu}}}\right) - \at\left(\sqrt{\frac{ \rho\gamma}{\beta\tilde{\nu}}}\right)    \right) }, \
	r(\infty)=1-s(\infty)=1-s(t^*) e^{-\frac{\beta}{\gamma} (r(\infty)-r(t^*))}
			\end{aligned}
		\end{equation}
		\hrule
		\vspace{0.2cm}
		\begin{equation}\label{eq:cost_B}
		\begin{aligned}
		&C_B(\iota)
		=\kappa\gamma_1\Bigg(\gamma\frac{[\ln s_0]^2-[\ln s(\tau_1)]^2}{2\beta^2}-\frac{s_0-s(\tau_1)}{\beta} +\big(s_0+i_0-\frac{\gamma}{\beta}\ln s_0\big)\frac{i_0+s_0-\iota-s(\tau_1)}{\gamma}+(\tau_2-\tau_1)\iota^2+\gamma\frac{[\ln \frac{\gamma}{\beta}]^2-[\ln s(\infty)]^2}{2\beta^2}-\frac{\frac{\gamma}{\beta}-s(\infty)}{\beta} +\\
		&+\big(\frac{\gamma}{\beta}+\iota-\frac{\gamma}{\beta}\ln \frac{\gamma}{\beta}\big)\frac{r(\infty)-1+\iota+\frac{\gamma}{\beta}}{\gamma}\Bigg) + \frac{1}{\iota}\bigg(\sqrt{\frac{s(\tau_1)}{\gamma}}-\sqrt{\frac{1}{\beta}}\bigg)^2+\frac{\kappa\gamma_0}{\gamma} (r(\infty)-r_0)\\
		&\textup{where } s(\tau_1) \textup{ and } r(\infty) \textup{ are the unique feasible solutions of}\\& \quad \ln\Big(\frac{s(\tau_1)}{s_0}\Big)=\frac{1}{\gamma}[s(\tau_1)-s_0+\iota-i_0], \
		r(\infty)=1-\frac{\gamma}{\beta} e^{-\frac{\beta}{\gamma} (r(\infty)-1+\iota+\frac{\gamma}{\beta})}, \ \tau_2-\tau_1=\frac{s(\tau_1)-\gamma/\beta}{\gamma \iota}, \
		s(\infty)=1-r(\infty).
	\end{aligned}
		\end{equation}
		\hrule
\end{table*}
\begin{figure}
	\centering
	\includegraphics[width=4.2cm]{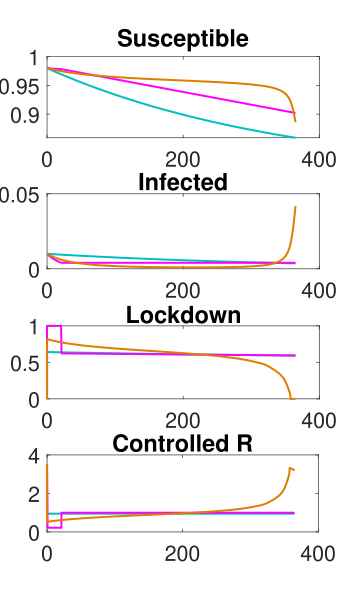}
	\includegraphics[width=4.2cm]{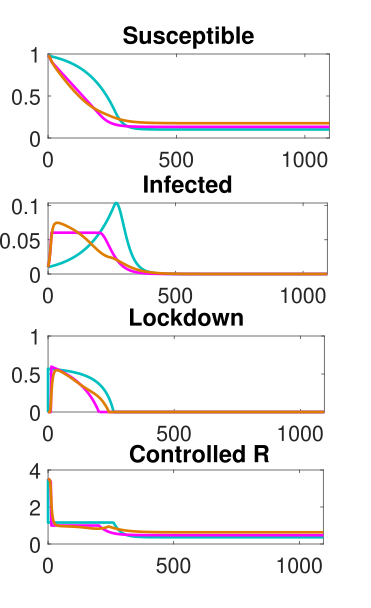}\\
	\includegraphics[width=8cm]{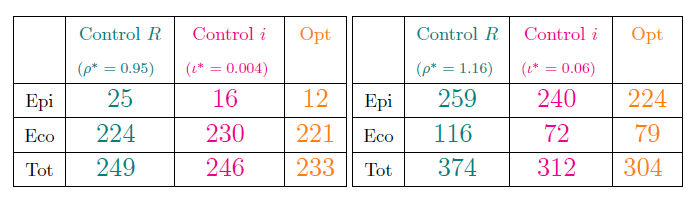}
	\caption{\emph{Left}: $T=1 yr$, optimal control (orange) and feedback policies (policy A in blue, policy B in purple) select suppression strategy. 
	Performances of the three strategies are comparable, and economic cost is much larger than epidemic cost. \emph{Right}: $T=3 yrs$, 
	optimal control and feedback policies select mitigation. The dynamics under policy B is similar to dynamics under the optimal control, and Policy B outperforms Policy A. In this simulation $\kappa=1.5$, and the other parameters are as in Fig. \ref{fig:control_R0}. \label{fig:suppression_mitigation}}
\end{figure}
To compare the cost achieved by the feedback policies with the optimal cost, we work under the assumption that $T=+\infty$, which has two main implications:
\begin{enumerate} 
	\item there exists $t^*$ such that $s(t^*)=\gamma/\beta$ under optimal control, that is, herd immunity is reached; indeed, if it were not the case, the lockdown could never be totally released and the economic cost would explode; 
	\item the fraction of infected $i(T)$ vanishes (see e.g. \cite{hethcote2000mathematics}).
\end{enumerate}
In other words, with infinite $T$ there is no phase transition between suppression and mitigation (unless $\kappa=0$), since avoiding the spread of the infection leads to an infinite economic cost and is therefore unfeasible.
\begin{remark}
We observed from simulations that above a certain value of $T$, the optimal policy does not depend on $T$. Indeed, if the epidemic reaches herd immunity, the infected naturally asymptotically decrease to $0$ and both epidemic and economic cost tend to vanish. Hence our results provide insight not only for the infinite horizon case, but for any large $T$.
\end{remark}\medskip
Before establishing our main results, we introduce a technical lemma on SIR dynamics which is needed for the proofs and may be of separate interest.
\begin{lemma}[Uncontrolled dynamics]
	\label{prp:nc_square}
	Let $a \le b$, and let $L(t)=0$ for every $t \in [a,b]$.
	Then,
	\begin{equation*}
		\begin{aligned}
			\int_a^b i^2dt
			&=\gamma\frac{[\ln(s(a))]^2-[\ln(s(b))]^2}{2\beta^2}-\frac{s(a)-s(b)}{\beta} +\\ &+\big(s(a)+i(a)-\frac{\gamma}{\beta}\ln (s(a))\big)\frac{r(b)-r(a)}{\gamma}.
		\end{aligned}
		\label{eq:i^2_nc}
	\end{equation*}
\end{lemma}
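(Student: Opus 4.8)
The plan is to reduce $\int_a^b i^2\,dt$ to a one-dimensional integral of elementary functions over the susceptible fraction $s$, using the two conserved quantities of the uncontrolled SIR flow.

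First I would record the standard invariants of \eqref{eq:sir} with $L\equiv0$ on $[a,b]$. Summing the three equations gives $\dot s+\dot i+\dot r=0$, so $N:=s+i+r$ is constant and equals $s(a)+i(a)+r(a)$. Dividing $\dot s$ by $\dot r$ gives $ds/dr=-\beta s/\gamma$ (valid wherever $i>0$, which holds on all of $[a,b]$ when $i(a)>0$ since $\dot i=(\beta s-\gamma)i$ keeps $i$ positive), hence $\ln s$ is an affine decreasing function of $r$ and
\[
r(t)-r(a)=\tfrac{\gamma}{\beta}\,\ln\tfrac{s(a)}{s(t)},\qquad t\in[a,b].
\]
Combining $i=N-s-r$ with this relation expresses $i$ as an explicit function of $s$ alone,
\[
i=\Big(s(a)+i(a)-\tfrac{\gamma}{\beta}\ln s(a)\Big)-s+\tfrac{\gamma}{\beta}\ln s=:K-s+\tfrac{\gamma}{\beta}\ln s .
\]

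Next I would change the variable of integration from $t$ to $s$. Since $\dot s=-\beta s i<0$ on $[a,b]$, the map $t\mapsto s(t)$ is a strictly decreasing diffeomorphism of $[a,b]$ onto $[s(b),s(a)]$ with $dt=-ds/(\beta s i)$, so
\[
\int_a^b i^2\,dt=\frac{1}{\beta}\int_{s(b)}^{s(a)}\frac{i(s)}{s}\,ds=\frac{1}{\beta}\int_{s(b)}^{s(a)}\Big(\frac{K}{s}-1+\frac{\gamma}{\beta}\,\frac{\ln s}{s}\Big)ds .
\]
Each term is elementary: $\int s^{-1}ds=\ln s$, $\int ds=s$, and $\int s^{-1}\ln s\,ds=\tfrac12(\ln s)^2$. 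Evaluating and substituting back $K=s(a)+i(a)-\tfrac{\gamma}{\beta}\ln s(a)$, together with $\ln s(a)-\ln s(b)=\tfrac{\beta}{\gamma}(r(b)-r(a))$ from the first step, turns the $K$-term into $\big(s(a)+i(a)-\tfrac{\gamma}{\beta}\ln s(a)\big)\tfrac{r(b)-r(a)}{\gamma}$ and reproduces exactly the stated identity. The degenerate case $i(a)=0$ is immediate: then $i\equiv0$, $s$ and $r$ are constant, and both sides vanish.

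There is no real obstacle here — the argument is a direct computation. The only points that need a little care are justifying the change of variables (positivity of $i$ and strict monotonicity of $s$ on the interval, plus the trivial degenerate case) and the bookkeeping of the substitution $\ln s(a)-\ln s(b)\leftrightarrow r(b)-r(a)$, so that the final expression comes out in the precise form claimed rather than an equivalent one written purely in terms of $s(a)$ and $s(b)$.
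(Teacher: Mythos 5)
Your proof is correct and follows essentially the same route as the paper's: both use the invariant $s+i-\tfrac{\gamma}{\beta}\ln s=\mathrm{const}$ to replace one factor of $i$ and then reduce everything to integrals of elementary functions of $s$. The only (cosmetic) difference is that you perform the change of variables $dt=-ds/(\beta s i)$ once at the outset, whereas the paper substitutes $i\,dt=-\dot{s}\,dt/(\beta s)$ and $i\,dt=\dot{r}\,dt/\gamma$ term by term; your added care about monotonicity of $s$ and the degenerate case $i(a)=0$ is a small bonus.
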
\medskip
\begin{proof}
	It can be shown that $s(t)+i(t)-\frac{\gamma}{\beta}s(t)$ is constant in time if $L(t)=0$, hence
	\begin{equation}
		\label{eq:constant}
		s(a)+i(a)-\frac{\gamma}{\beta}s(a) = s(b)+i(b)-\frac{\gamma}{\beta}s(b).
	\end{equation}
	By \eqref{eq:constant}, omitting the dependence on $t$,
	\begin{equation}
		\begin{aligned}
			&\int_a^b i^2dt=\int_a^b i\big(\frac{\gamma}{\beta}\ln (s)-\frac{\gamma}{\beta}\ln s(a)-s+s(a)+i(a)\big)dt\\
			=&\frac{\gamma}{\beta}\int_a^b i \ln (s) dt-\int_a^b is dt + \big(s(a)+i(a)-\frac{\gamma}{\beta}\ln s(a)\big)\int_a^b i dt.
		\end{aligned}
		\label{eq:sum_pieces}
	\end{equation}
	From $\dot{s}=-\beta s i$, the first integral gets:
	\begin{equation}
		\begin{aligned}
			&\int_a^b i \ln(s) dt= -\frac{1}{\beta}\int_a^b \frac{\dot{s}\ln(s)}{s}dt\\
			=&-\frac{1}{2\beta}\int_a^b\frac{d}{dt}[\ln(s)]^2dt=\frac{[\ln s(a)]^2-[\ln s(b)]^2}{2\beta}.
			\label{eq:piece1}
		\end{aligned}
	\end{equation}
	The second and the third integral are
	\begin{equation}
		\label{eq:piece2}
		\begin{aligned}
			&\int_a^b is dt=-\frac{1}{\beta}\int_a^b \frac{\dot{s}}{s}sdt=-\frac{1}{\beta}\int_a^b \dot{s}dt=\frac{s(a)-s(b)}{\beta},\\
			&\int_a^b i(t)dt=\int_a^b\frac{\dot{r}(t)}{\gamma}dt=\frac{r(b)-r(a)}{\gamma}.
		\end{aligned}
	\end{equation}
	The statement follows by plugging \eqref{eq:piece1} and \eqref{eq:piece2} in \eqref{eq:sum_pieces}.
\end{proof}\medskip
Let $C_{A}(\rho)$ and $C_B(\iota)$ denote the cost of policies $A$ and $B$ for a given value of $\rho$ and $\iota$, respectively. In the next two propositions we derive analytical expressions for $C_{A}(\rho)$ and $C_B(\iota)$. For the sake of readability, for the two policies we work respectively under the assumption that $\iota \ge i_0$ and $\rho > 1$, but similar expressions hold in case $\iota<i_0$ and $\rho \le 1$. 
\begin{proposition}
	\label{prpA}
	Let $T = +\infty$. Then, $C_A(\rho)$ is as in \eqref{eq:t*}.
\end{proposition}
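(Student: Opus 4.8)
The plan is to compute the total cost $C_A(\rho) = \int_0^\infty c(t)\,dt = \int_0^\infty L_\rho(t)\,dt + \kappa\gamma_0\int_0^\infty i\,dt + \kappa\gamma_1\int_0^\infty i^2\,dt$ by splitting the time axis at $t^*$ (the end of Phase~I) and at the start of Phase~I, call it $T_1$, and evaluating each of the three integrals phase by phase. The three easy pieces are: (i) $\int_0^\infty L_\rho\,dt$, which only receives a contribution on $[T_1,t^*]$ where by construction the lockdown holds the controlled reproduction number fixed at $\rho$, so one needs to integrate $L_\rho(t) = 1-\sqrt{\rho\gamma/(\beta s(t))}$ against the Phase~I dynamics $\dot s = -\rho\gamma i$; I expect this to collapse to the closed form $t^*-T_1$ after using $\int L_\rho\,dt$ together with the conserved-type quantities of Phase~I (the arctanh terms in $T_1$ come precisely from this kind of integration, see below). (ii) $\kappa\gamma_0\int_0^\infty i\,dt = \tfrac{\kappa\gamma_0}{\gamma}\int_0^\infty \dot r\,dt = \tfrac{\kappa\gamma_0}{\gamma}(r(\infty)-r_0)$, immediate from $\dot r = \gamma i$. (iii) $\kappa\gamma_1\int_0^\infty i^2\,dt$, which splits as $\int_{T_1}^{t^*} + \int_{t^*}^\infty$: on $[t^*,\infty)$ the dynamics are uncontrolled so Lemma~1 applies directly with $a=t^*$, $b=\infty$, giving exactly the big parenthesized block in \eqref{eq:t*}; on $[T_1,t^*]$ (and $[0,T_1]$ if $i_0<\iota$-type reasoning — here $\rho>1$ so in Phase~I before $T_1$ nothing special happens, in fact for Policy~A Phase~I starts at $t=0$ when $\beta s_0/\gamma>\rho$) one uses $\dot i = (\rho-1)\gamma i$ so $\int i^2\,dt = \tfrac{1}{2(\rho-1)\gamma}(i(t^*)^2 - i(0)^2)$ by substitution $u=i^2$, $\dot u = 2i\dot i = 2(\rho-1)\gamma i^2$. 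Summing these gives \eqref{eq:t*}.

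Next I would verify the auxiliary closed forms listed in the "with" line. Phase~I has $\dot s = -\rho\gamma i$ and $\dot i = (\rho-1)\gamma i$, so $\tfrac{ds}{di} = -\tfrac{\rho}{\rho-1}$, hence $s + \tfrac{\rho}{\rho-1} i$ is conserved in Phase~I — wait, the paper writes $\nu = \rho/(\rho+1)$ and $\tilde\nu = s_0+\nu i_0$; I would recheck the invariant carefully, but in any case some affine combination $s+\nu i$ is conserved, which pins down $s(t^*)=\gamma\rho/\beta$, $i(t^*)$, and $r(t^*)=1-s(t^*)-i(t^*)$ in terms of $\tilde\nu$ and $\nu$. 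The expression for $t^*$ follows by solving the linear ODE $i(t) = i(0)e^{(\rho-1)\gamma t}$ for the time at which $s$ hits $\gamma\rho/\beta$, after expressing $i$ along Phase~I via the invariant. The expression for $T_1$ (start of Phase~I, where actually for Policy~A we should have $T_1=0$ unless a normalization shift is used — I would re-examine whether $T_1$ here plays the role of "time origin of Phase~I" and contributes the arctanh terms through the integral $\int_0^{T_1}$ of the uncontrolled pre-phase; more likely $T_1$ is the duration needed to integrate $\int L_\rho\,dt$ in closed form): the arctanh appears because along Phase~I, $\int L_\rho\,dt = \int (1-\sqrt{\rho\gamma/(\beta s)})\,dt$ and changing variables to $s$ via $dt = -ds/(\rho\gamma i)$ with $i$ an affine function of $s$ produces $\int \frac{ds}{\sqrt{\tilde\nu - s}\cdot(\dots)}$-type integrals whose antiderivative is an arctanh. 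Finally $r(\infty)$ solves the standard transcendental final-size relation $r(\infty) = 1 - s(t^*)e^{-\frac{\beta}{\gamma}(r(\infty)-r(t^*))}$, obtained by integrating $\tfrac{ds}{dr} = -\tfrac{\beta}{\gamma}s$ on Phase~II from $t^*$ to $\infty$.

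The main obstacle I anticipate is not any single integral — each is a routine substitution once the right change of variables is chosen — but rather the bookkeeping of the Phase~I invariant and the resulting closed forms for $t^*$, $T_1$, $s(t^*)$, $r(t^*)$: getting the constant $\nu=\rho/(\rho+1)$ versus $\rho/(\rho-1)$ right, and making sure the arctanh arguments $\sqrt{s_0/\tilde\nu}$ and $\sqrt{\rho\gamma/(\beta\tilde\nu)}$ come out exactly as stated, requires care with signs and with the domain on which $\sqrt{\tilde\nu-s}>0$. A secondary subtlety is the degenerate cases: if $\beta s_0/\gamma\le\rho$ then Phase~I is empty and several terms vanish, and one should note (as the paper does) that the stated formula is under $\rho>1$; for $\rho\le1$ the Phase~I ODE $\dot i=(\rho-1)\gamma i$ has $i$ decaying and one gets an analogous expression with the sign of $(\rho-1)$ flipped, which I would remark on rather than prove in full. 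Modulo these caveats, assembling the three integral contributions plus Lemma~1 yields \eqref{eq:t*} directly.
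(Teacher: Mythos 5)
Your plan matches the paper's proof essentially step for step: the cost is split into economic, infected, and infected-squared parts, the infected-squared integral is evaluated via the linear Phase~I dynamics on $[0,t^*]$ and via Lemma~\ref{prp:nc_square} on $[t^*,\infty)$, and the economic cost is $t^*-T_1$ with $T_1=\int_0^{t^*}\sqrt{\rho\gamma/(\beta s)}\,dt$ (your second reading of $T_1$ is the correct one --- Phase~I starts at $t=0$, and $T_1$ is just that integral, evaluated via the affine relation between $s$ and $i$ along Phase~I, which is where the arctanh terms arise). Your suspicion about the invariant is also right: the appendix uses $\nu=\rho/(\rho-1)$, consistent with $ds/di=-\rho/(\rho-1)$, so the $\rho/(\rho+1)$ in the displayed equation is a typo.
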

\begin{proof}
	See the Appendix.
\end{proof}
\begin{proposition}
	\label{prpB}
Let $T = +\infty$. Then, $C_{B}(\iota)$ is as in  \eqref{eq:cost_B}.
\end{proposition}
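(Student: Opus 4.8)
The plan is to split the total cost additively, both into its three terms and along the three phases of Policy B. Since $c(t)=L(t)+\kappa\gamma_0 i(t)+\kappa\gamma_1 i(t)^2$ and $T=+\infty$,
\[
C_B(\iota)=\int_0^\infty L(t)\,dt+\kappa\gamma_0\int_0^\infty i(t)\,dt+\kappa\gamma_1\int_0^\infty i(t)^2\,dt .
\]
The middle integral is immediate: $\dot r=\gamma i$ holds in all phases, so $\int_0^\infty i\,dt=(r(\infty)-r_0)/\gamma$, which is the $\tfrac{\kappa\gamma_0}{\gamma}(r(\infty)-r_0)$ summand. For the quadratic term I would write $[0,\infty)=[0,\tau_1)\cup[\tau_1,\tau_2)\cup[\tau_2,\infty)$. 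On Phase II, $i\equiv\iota$, so that piece is $\iota^2(\tau_2-\tau_1)$. On Phases I and III the lockdown is off (using $\iota\ge i_0$ for Phase I), so Lemma~\ref{prp:nc_square} applies directly with $(a,b)=(0,\tau_1)$ and $(a,b)=(\tau_2,\infty)$; I then substitute the boundary data built into Policy B — namely $i(\tau_1)=i(\tau_2)=\iota$, $s(\tau_2)=\gamma/\beta$ — and rewrite the $r$-differences via $r=1-s-i$, i.e. $r(\tau_1)-r_0=s_0+i_0-\iota-s(\tau_1)$ and $r(\infty)-r(\tau_2)=r(\infty)-1+\iota+\gamma/\beta$. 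Summing the three pieces reproduces exactly the bracket multiplying $\kappa\gamma_1$ in \eqref{eq:cost_B}.

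The economic integral $\int_0^\infty L\,dt$ is supported on Phase II only, where $L_\iota(t)$ solves $\beta s(t)(1-L_\iota(t))^2=\gamma$, so $L_\iota(t)=1-\sqrt{\gamma/(\beta s(t))}$, and where $\dot s=-\gamma\iota$ is constant. Changing variables from $t$ to $s$ (with $s$ decreasing from $s(\tau_1)$ to $\gamma/\beta$) gives
\[
\int_{\tau_1}^{\tau_2} L_\iota(t)\,dt=(\tau_2-\tau_1)-\sqrt{\tfrac{\gamma}{\beta}}\,\frac{1}{\gamma\iota}\int_{\gamma/\beta}^{s(\tau_1)}\frac{ds}{\sqrt s}.
\]
Using $\tau_2-\tau_1=(s(\tau_1)-\gamma/\beta)/(\gamma\iota)$ (linear decay of $s$ in Phase II) and $\int_{\gamma/\beta}^{s(\tau_1)}s^{-1/2}\,ds=2(\sqrt{s(\tau_1)}-\sqrt{\gamma/\beta})$, the two terms combine into a perfect square, $\frac{1}{\gamma\iota}\big(\sqrt{s(\tau_1)}-\sqrt{\gamma/\beta}\big)^2=\frac{1}{\iota}\big(\sqrt{s(\tau_1)/\gamma}-\sqrt{1/\beta}\big)^2$, which is the last summand of \eqref{eq:cost_B}.

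It remains to characterize the auxiliary quantities. The value $\tau_2-\tau_1$ is as above. For $s(\tau_1)$ I would invoke that on Phase I the quantity $s+i-\tfrac{\gamma}{\beta}\ln s$ is conserved (the invariant already used in the proof of Lemma~\ref{prp:nc_square}); evaluating it at $t=0$ and $t=\tau_1$ with $i(\tau_1)=\iota$ yields the implicit equation for $s(\tau_1)$. For $r(\infty)$, on the uncontrolled Phase III $\tfrac{ds}{dr}=-\tfrac{\beta}{\gamma}s$ integrates to the classical final-size relation $s(\infty)=s(\tau_2)\exp\!\big(-\tfrac{\beta}{\gamma}(r(\infty)-r(\tau_2))\big)$; plugging in $s(\tau_2)=\gamma/\beta$, $r(\tau_2)=1-\gamma/\beta-\iota$ and $s(\infty)=1-r(\infty)$ (since $i(\infty)=0$) gives the stated equation for $r(\infty)$, and uniqueness of the feasible root follows from the usual monotonicity/convexity argument for final-size equations (the same applies to the equation for $s(\tau_1)$). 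Assembling the three contributions completes the proof. The step I expect to be most delicate is not any individual computation but the bookkeeping of the phase-boundary values fed into Lemma~\ref{prp:nc_square} and into the final-size relation, together with verifying that the Phase~II economic integral collapses exactly to the claimed square and that the implicit solutions are well defined.
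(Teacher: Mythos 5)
Your proposal is correct and follows essentially the same route as the paper's proof: the same three-way split of the cost into economic, linear, and quadratic terms, the same phase decomposition with Lemma~\ref{prp:nc_square} applied on Phases I and III, the same conservation law for $s(\tau_1)$, and the same final-size relation for $r(\infty)$. The only cosmetic difference is that you evaluate the Phase~II economic integral directly via the substitution $s(t)=\gamma/\beta+(\tau_2-t)\gamma\iota$, whereas the paper imports the integrand from \cite[Theorem 1]{miclo2020optimal}; the two computations are identical.
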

\begin{proof}
	See the Appendix.
\end{proof}
The idea for the proofs is to split the cost in three terms:
\begin{enumerate}
	\item economic cost: $\int_0^T L dt$;
	\item infected cost:
	$\int_0^T i dt  $;
	\item infected squared cost:
	$\int_0^T i^2 dt  $.
\end{enumerate}
The terms may be computed exactly by using Lemma \ref{prp:nc_square} and by using the fact that in presence of control the dynamics is linear.
In the next proposition we give a lower bound on the optimal cost, which will be used to compare Policy A and B with the optimal control. For technical reasons we restrict our analysis to policies that do not put control after herd immunity. This is a natural assumption, and numerical solutions show that this is indeed the case for most values of parameters.
	\begin{proposition}
		\label{prp:lb}
	Let $T=+\infty$, and let $t^*$ be the smallest time such that $s(t^*)\le\gamma/\beta$ (herd immunity). The cost of any control such that $L(t)=0$ for every $t \in [t^*,+\infty)$ is lower bounded by
	\begin{equation*}
		\label{eq:lower_bound}
		C^* := \underset{0\le i^* \le i_{max}}\min U(i^*),
	\end{equation*}
	where
	$
	 U(i^*)=2 {U_{12}^I(i^*)}+U_{22}^{II}(i^*)+U_{3}(i^*)
	 $
	 and
	 \begin{equation*}\footnotesize
		\begin{aligned}\footnotesize
			U_{12}^I(i^*)&= \epsilon \sqrt{\kappa\gamma_1} \left[ \frac{ -i^*+ i_0-s({t^*})+s_0}{\gamma} +\frac{1}{\beta} (\ln{s(t^*})- \ln s_0)\right],\\	
		U_{22}^{II}(i^*) &=
			\frac{\gamma}{\beta}\frac{[\ln{(\gamma/\beta)}]^2-[\ln(s(\infty))]^2}{2\beta}-\frac{\gamma/\beta-s(\infty)}{\beta} +\\ &+\big(\gamma/\beta+i^*-\frac{\gamma}{\beta}\ln {\gamma/\beta}\big)\frac{i^*+\gamma/\beta-s(\infty)}{\gamma},\\
			U_3(i^*)&= \frac{\kappa\gamma_0}{\gamma}(r(\infty)-r_0),
		\end{aligned}
	\end{equation*}
	with $r(\infty)$ solution of $
		r(\infty)=1-s(t^*) e^{-\frac{\beta}{\gamma} \big(r(\infty)-1+i^*+s(t^*)\big)}$, $s(\infty)=1-r(\infty)$, $i_{max}=i_0+s_0-\frac{\gamma}{\beta}\big(1-\ln\frac{\gamma}{\beta s_0}\big)$, $\epsilon = \sqrt{27/32}$.
\end{proposition}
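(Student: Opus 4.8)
The plan is to split the total cost $\int_0^{\infty}c\,dt$ into the \emph{economic cost} $\int_0^{\infty}L\,dt$, the \emph{linear infected cost} $\kappa\gamma_0\int_0^{\infty}i\,dt$ and the \emph{quadratic infected cost} $\kappa\gamma_1\int_0^{\infty}i^2\,dt$, and then to split the time axis at the herd-immunity time $t^*$, where $s(t^*)=\gamma/\beta$ by continuity and monotonicity of $s$ (if $s(t)>\gamma/\beta$ for all $t$ the lockdown can never be released, the economic cost diverges, and the bound is trivial; so assume $t^*<\infty$). Everything is bounded in terms of the single scalar $i^*:=i(t^*)$. On $[t^*,\infty)$ the control vanishes by hypothesis, so the trajectory is the uncontrolled one issued from $(\gamma/\beta,\,i^*,\,1-\gamma/\beta-i^*)$, hence a function of $i^*$ alone: Lemma~\ref{prp:nc_square} applied on $[t^*,\infty)$ gives $\kappa\gamma_1\int_{t^*}^{\infty}i^2\,dt=U_{22}^{II}(i^*)$, while the final-size relation $r(\infty)=1-s(t^*)e^{-\frac{\beta}{\gamma}(r(\infty)-1+i^*+s(t^*))}$ makes $r(\infty)$ a function of $i^*$, so that $\kappa\gamma_0\int_0^{\infty}i\,dt=\frac{\kappa\gamma_0}{\gamma}(r(\infty)-r_0)=U_3(i^*)$ using $\dot r=\gamma i$. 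Both identities are exact once $i^*$ is fixed.

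The heart of the proof is the estimate $\int_0^{t^*}L\,dt+\kappa\gamma_1\int_0^{t^*}i^2\,dt\ge 2\,U_{12}^I(i^*)$ on the pre-herd-immunity window. I would first record a \emph{budget identity}: from $\dot s+\dot i=-\gamma i$ one gets $\int_0^{t^*}i\,dt=\frac{s_0+i_0-\gamma/\beta-i^*}{\gamma}$, and from $-\frac{d}{dt}\ln s=\beta(1-L)^2 i$ one gets $\int_0^{t^*}(1-L)^2 i\,dt=\frac1\beta\ln\frac{s_0}{\gamma/\beta}$; subtracting,
\[
\int_0^{t^*}L(2-L)\,i\,dt=\frac{-i^*+i_0-\gamma/\beta+s_0}{\gamma}+\frac1\beta\big(\ln(\gamma/\beta)-\ln s_0\big)=\frac{U_{12}^I(i^*)}{\epsilon\sqrt{\kappa\gamma_1}},
\]
so the bilinear ``effort'' $\int_0^{t^*}L(2-L)i\,dt$ is completely determined by $i^*$. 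I would then prove the \emph{pointwise inequality}
\[
L+\kappa\gamma_1 i^2\ \ge\ 2\epsilon\sqrt{\kappa\gamma_1}\,L(2-L)\,i\qquad\text{for all }L\in[0,1],\ i\ge0,
\]
by viewing the left minus right side as a quadratic in $i$ with positive leading coefficient: its minimum over $i\ge0$ is attained at $i=\epsilon L(2-L)/\sqrt{\kappa\gamma_1}\ge0$ and equals $L\big(1-\epsilon^2 L(2-L)^2\big)$, which is nonnegative precisely when $\epsilon^2\le 1/\max_{x\in[0,1]}x(2-x)^2$; since $x(2-x)^2$ is maximized at $x=2/3$ with value $32/27$, the choice $\epsilon=\sqrt{27/32}$ makes the bound sharp. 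Integrating the pointwise inequality over $[0,t^*]$ and inserting the budget identity gives exactly $\int_0^{t^*}L\,dt+\kappa\gamma_1\int_0^{t^*}i^2\,dt\ge 2\epsilon\sqrt{\kappa\gamma_1}\int_0^{t^*}L(2-L)i\,dt=2\,U_{12}^I(i^*)$.

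Finally I would pin down the admissible range of $i^*$. Clearly $i^*\ge0$; for the upper bound, note that along \emph{any} admissible control $\frac{d}{dt}\big[s+i-\tfrac{\gamma}{\beta}\ln s\big]=-\gamma\,L(2-L)\,i\le0$, so $s+i-\tfrac{\gamma}{\beta}\ln s$ is non-increasing; evaluating at $t^*$ with $s(t^*)=\gamma/\beta$ yields $\tfrac{\gamma}{\beta}+i^*-\tfrac{\gamma}{\beta}\ln\tfrac{\gamma}{\beta}\le s_0+i_0-\tfrac{\gamma}{\beta}\ln s_0$, i.e.\ $i^*\le i_0+s_0-\tfrac{\gamma}{\beta}\big(1-\ln\tfrac{\gamma}{\beta s_0}\big)=i_{max}$. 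Adding the four pieces, any admissible control obeys $\int_0^{\infty}c\,dt\ge 2U_{12}^I(i^*)+U_{22}^{II}(i^*)+U_3(i^*)=U(i^*)\ge\min_{0\le i^*\le i_{max}}U(i^*)=C^*$. The step I expect to be the main obstacle is the pairing of the pointwise inequality with the budget identity: one has to discover that the quantity frozen by the SIR dynamics (once $i^*$ and $s(t^*)=\gamma/\beta$ are prescribed) is precisely $\int_0^{t^*}L(2-L)i\,dt$, and then extract the sharp constant $\epsilon=\sqrt{27/32}$ from the elementary maximization of $x(2-x)^2$; the remaining manipulations are bookkeeping built on Lemma~\ref{prp:nc_square} and the sign-definite time derivative of $s+i-\tfrac{\gamma}{\beta}\ln s$.
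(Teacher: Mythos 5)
Your proof is correct and follows essentially the same route as the paper: the same decomposition into $2U_{12}^I+U_{22}^{II}+U_3$, the same split at the herd-immunity time, the same budget identities for $\int_0^{t^*} i\,dt$ and $\int_0^{t^*}(1-L)^2 i\,dt$, Lemma~\ref{prp:nc_square} for the post-$t^*$ quadratic cost, and minimization over $i^*\in[0,i_{max}]$. The only (cosmetic) differences are that you fuse the paper's two steps --- the bound $1-\sqrt{y}\ge\epsilon^2(1-y)^2$ with $y=(1-L)^2$ followed by $c_1^2+c_2^2\ge 2c_1c_2$ --- into a single pointwise inequality whose sharp constant comes from $\max_{x\in[0,1]}x(2-x)^2=32/27$ (the same computation in different variables), and that you derive $i^*\le i_{max}$ from the monotonicity of $s+i-\tfrac{\gamma}{\beta}\ln s$ rather than citing the uncontrolled peak.
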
\medskip
\begin{remark}
	Notice that $U(i^*)$ depends on $i^*$ both explicitly and through $s(\infty)$ and $r(\infty)$.
\end{remark}\medskip
\begin{proof}
	See the Appendix.
\end{proof}
Apart from constants, $U_3$ is the infected cost, and $U_{22}^{II}$ is the infected squared cost after herd immunity, computed starting from the state at time $t^*$ under the assumption that there is no control after herd immunity (so that economic cost is zero). These are computed using \eqref{eq:sir} with $L(t)=0$ and Lemma \ref{prp:nc_square}, respectively. The term $U_{12}^I$ is a lower bound for the sum of economic and infected squared cost before herd immunity. \\
Let $C_{opt}$ denote the cost of the optimal control among all the controls such that after herd immunity do not put any lockdown.
We can now derive bounds between performance of our two policies and $C_{opt}$.
\begin{theorem}
	Let $T = +\infty$. Then,
	\begin{equation*}
		C_A(\rho^*)-C_{opt} \le \min_\rho \ \eqref{eq:t*} - C^*.
	\end{equation*}
\end{theorem}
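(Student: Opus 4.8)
The bound is essentially a corollary of the three preceding results, so the plan is short: express $C_A(\rho^*)$ in closed form via Proposition~\ref{prpA}, bound $C_{opt}$ from below by $C^*$ via Proposition~\ref{prp:lb}, and subtract.

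First I would rewrite the left endpoint. By the definition of $\rho^*$ as the minimizer of the Policy~A cost over the scalar parameter $\rho$, we have $C_A(\rho^*)=\min_\rho C_A(\rho)$. Proposition~\ref{prpA} gives the explicit expression \eqref{eq:t*} for $C_A(\rho)$ when $\rho>1$ (and, as noted just before that proposition, an analogous formula when $\rho\le 1$), hence $C_A(\rho^*)=\min_\rho \eqref{eq:t*}$, which is exactly the first term on the right-hand side of the asserted inequality.

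Second I would show $C_{opt}\ge C^*$. By definition $C_{opt}$ is the infimum of the cost \eqref{eq:cost} over the class of admissible controls that impose no lockdown after herd immunity, i.e.\ with $L(t)=0$ for all $t\ge t^*$, where $t^*$ is the first time $s(t^*)\le\gamma/\beta$. Proposition~\ref{prp:lb} states precisely that every control in this class has cost at least $C^*$, so passing to the infimum preserves the inequality and gives $C_{opt}\ge C^*$. Combining with the first step,
\[
C_A(\rho^*)-C_{opt}\ \le\ C_A(\rho^*)-C^*\ =\ \min_\rho \eqref{eq:t*}-C^*,
\]
which is the claim.

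There is no substantive obstacle here: the analytical content is carried entirely by Lemma~\ref{prp:nc_square} and Propositions~\ref{prpA} and~\ref{prp:lb}. The only point that deserves a line of justification is that Policy~A, for every $\rho$ (in particular $\rho^*$), indeed belongs to the class over which $C_{opt}$ is optimized — that it never locks down past herd immunity — which makes the bound non-vacuous, since then the right-hand side is non-negative ($C_A(\rho^*)\ge C_{opt}\ge C^*$). This follows from the construction of Policy~A: for $\rho>1$, $s$ reaches $\gamma/\beta$ only inside Phase~II, where by construction $R(t)\le\rho$ and hence $L_\rho\equiv 0$; for $\rho\le 1$ the infected decay from the outset and again no lockdown follows herd immunity. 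It is also worth noting that the resulting bound is effective, as $\min_\rho \eqref{eq:t*}$ is a one-dimensional minimization of an explicit function and $C^*=\min_{0\le i^*\le i_{max}}U(i^*)$ is likewise a one-dimensional minimization of an explicit function.
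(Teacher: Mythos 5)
Your proof is correct and follows exactly the paper's route: the paper's own proof is the one-line observation that the bound follows immediately from Propositions~\ref{prpA} and~\ref{prp:lb}, which is precisely the two-step argument ($C_A(\rho^*)=\min_\rho \eqref{eq:t*}$ and $C_{opt}\ge C^*$) you spell out. Your additional remark that Policy~A imposes no lockdown after herd immunity, so the bound is non-vacuous, is a sensible clarification the paper leaves implicit.
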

\begin{proof}
The proof follows immediately from Propositions \ref{prpA} and~\ref{prp:lb}.
\end{proof}\medskip
\begin{theorem}
Let $T = +\infty$. Then,
\begin{equation*}
	C_B(\iota^*)-C_{opt} \le \min_\iota \ \eqref{eq:cost_B} - C^*.
\end{equation*}
\end{theorem}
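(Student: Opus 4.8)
The plan is to mirror the proof of the preceding theorem and obtain the bound as an immediate consequence of Proposition~\ref{prpB} together with Proposition~\ref{prp:lb}. By Proposition~\ref{prpB}, under $T=+\infty$ the cost of Policy~B with threshold $\iota$ is given \emph{exactly} by the closed form \eqref{eq:cost_B}; since $\iota^*$ is by definition the minimizer of this cost over $\iota$, we have $C_B(\iota^*)=\min_\iota\,\eqref{eq:cost_B}$. By Proposition~\ref{prp:lb}, every control that releases the lockdown after herd immunity has cost at least $C^*$; as $C_{opt}$ is precisely the infimum of the cost over exactly this admissible class, we get $C_{opt}\ge C^*$, equivalently $-C_{opt}\le -C^*$. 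Combining the two relations gives $C_B(\iota^*)-C_{opt}\le \min_\iota\,\eqref{eq:cost_B}-C^*$, which is the claim.

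The only points needing (minor) care are bookkeeping. First, one should verify that the restricted family of controls defining $C_{opt}$ --- those with $L(t)=0$ on $[t^*,+\infty)$, with $t^*$ the herd-immunity time --- is the same family to which Proposition~\ref{prp:lb} applies; this holds by the definition of $C_{opt}$ given just above the theorem, and moreover Policy~B itself belongs to this class since its Phase~III is uncontrolled. Second, the infinite-horizon simplifications used both in deriving \eqref{eq:cost_B} and in the definition of $C^*$ (herd immunity is attained, $i(t)\to 0$, and $s(\infty),r(\infty)$ solve the stated fixed-point equations) are the same, so the two sides of the inequality are expressed in consistent terms. If one also wanted to cover $\iota<i_0$, it would suffice to replace \eqref{eq:cost_B} by the analogous expression mentioned in the text preceding Proposition~\ref{prpA} and repeat the argument verbatim.

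I do not expect any real obstacle here: all the substantive work has already been done in Propositions~\ref{prpB} and~\ref{prp:lb}, whose proofs rely on the exact integral identity of Lemma~\ref{prp:nc_square} and on the linearity of the dynamics while the lockdown is active. The interest of the statement is quantitative rather than structural: both $\min_\iota\,\eqref{eq:cost_B}$ and $C^*$ are explicitly computable, so the right-hand side can be evaluated numerically, and --- consistently with Fig.~\ref{fig:suppression_mitigation} --- it turns out to be small in the mitigation regime, certifying that Policy~B is close to optimal.
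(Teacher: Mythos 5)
Your proof is correct and follows exactly the paper's (one-line) argument: the bound is the immediate combination of $C_B(\iota^*)=\min_\iota\,\eqref{eq:cost_B}$ from Proposition~\ref{prpB} with $C_{opt}\ge C^*$ from Proposition~\ref{prp:lb}. The bookkeeping remarks you add (consistency of the admissible class and of the infinite-horizon assumptions) are sensible but not a departure from the paper's route.
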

\begin{proof}
The proof follows immediately from Propositions \ref{prpB} and \ref{prp:lb}.
\end{proof}\medskip
In Fig. \ref{fig:plot} the exact cost of the two policies, the cost under the optimal control computed numerically, and the lower bound on the optimal cost are plotted as a function of $\kappa$. Note that controlling $i(t)$ is close to the optimal control computed numerically. The theoretical analysis allows to conclude that, for the tested parameters, controlling $i(t)$ is better than controlling $R(t)$, as the distance between cost of Policies A and B is comparable with the distance between Policy B and the lower bound $C^*$. From a computational viewpoint, note that controlling $i(t)$ instead of solving the optimal control, reduces the complexity of the problem from an optimization over a set of functions to a minimization over the parameter $\iota$.
\begin{figure}
	\centering
	\includegraphics[width=6cm]{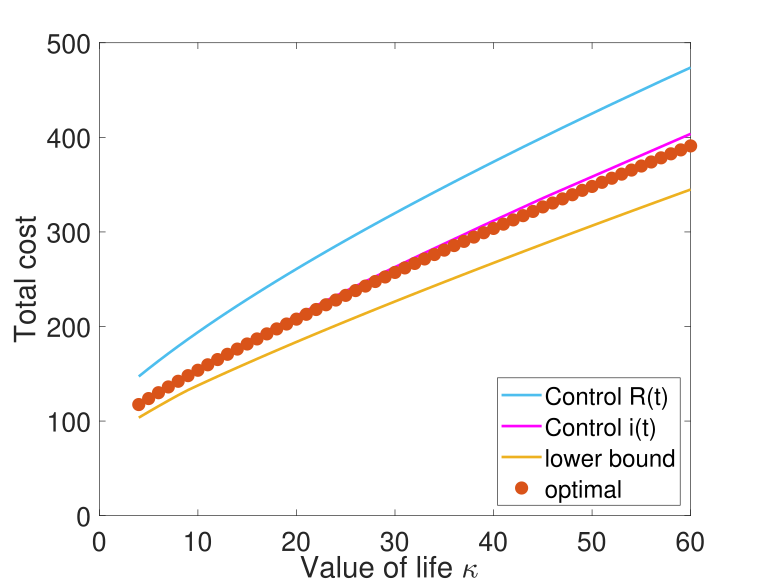}
	\caption{\label{fig:plot} Exact cost of Policies A and B with optimal $\rho^*$ and $\iota^*$ (analytical), optimal cost (numerical) and lower bound to the optimal cost (analytical) as functions of $\kappa$. We noticed that $T=5$ years is large enough to approximate infinite time horizon. Indeed, after a large enough time, both economic and epidemic cost are negligible (see Fig. \ref{fig:suppression_mitigation}). Other parameters are as in Fig. \ref{fig:control_R0}.}
\end{figure}
\section{Conclusion}
In this work we consider a SIR with lockdown, and we study the optimal lockdown as solution to an optimal control problem. We model the cost as the sum of an epidemic cost, and economic cost due to lockdowns. We first show numerically that, as the time horizon and the cost of life vary, the solutions of optimal control exhibit a phase transition from suppression strategies to mitigation strategies, as described in \cite{ferguson2020report}. Then, we introduce two simple feedback policies, one stabilizing the reproduction number, and one stabilizing the fraction of infected. We compute their exact cost, and compare analytically their performance with the optimal cost. We show that in case of infinite time horizon controlling the fraction of infected is close to optimal. Future directions include but are not limited to: investigation on where the phase transition occurs; extending the analysis to the case of finite time horizon, considering different cost functions like sigmoidal functions, and extending the analysis to network SIR, as in the numerical analysis of \cite{acemoglu2020multi}.

\section*{Acknowledgements}
This research was carried on within the framework of the MIUR-funded {\it Progetto di Eccellenza} of the {\it Dipartimento di Scienze Matematiche G.L. Lagrange}, Politecnico di Torino, CUP: E11G18000350001. It received partial support from the {\it Compagnia di San Paolo} through a Joint Research Project. It was also supported by C3.ai Digital Transformation Institute award.

\bibliographystyle{IEEEtran}
\bibliography{bibliography}

\section*{Appendix}
Before giving the proofs of the Propositions, we compute the asymptotic state of an uncontrolled SIR.
Using \eqref{eq:constant} with $b=+\infty$, and using the well known fact that $\lim_{t \rightarrow \infty} i(t) = 0$ \cite{hethcote2000mathematics}, which implies $r(\infty)=1-s(\infty)$, we get
\begin{equation}
	\label{eq:r_inf}
	r(\infty)=1-s(a) e^{-\frac{\beta}{\gamma} (r(\infty)-r(a))}.
\end{equation}
\subsection*{Proof of Proposition \ref{prpA}}
	Solving explicitly Phase I of the dynamics, one gets:
	\begin{equation}
		\label{eq:explicit_part1}
		\begin{aligned}
			s(t^*) &= \frac{\gamma \rho}{\beta}, \quad t^*=\frac{ ln\left( 1 - \frac{\rho\gamma-\beta s_0 }{\beta\nu i_0}\right)}{(\rho-1)\gamma},\\
			i(t^*) &= i_0 e^{2(\rho-1)t^*}. 
		\end{aligned}
	\end{equation}
	Also,
	\begin{align*}
		s(t) &=s_0+ \int_0^t \dot{s}(\tau) d\tau = s_0-\rho \gamma  i_0 \int_0^t  e^{(\rho-1)\gamma \tau} d\tau  \\
		&= s_0+\frac{\rho   i_0}{{(\rho-1)}} ( 1- e^{(\rho-1)\gamma t}),
	\end{align*}
	which allows concluding 
	\begin{equation}
		\label{eq:st*}
	s(t^*) = s(0)+\frac{\rho   }{{\rho-1}} ( i(0)- i(t^*)), \quad
	r(t^*)= 1 - \frac{s(t^*)+\tilde{\nu}}{\nu}.
	\end{equation}
	The cost is hence
	\begin{equation}
		\label{eq:cost_parts}
			\int_0^\infty c(t) dt
			=\int_0^\infty \left[L + \kappa\gamma_d(i)i\right]dt=\int_0^\infty \left[L + \kappa \gamma_1 i^2  + \kappa\gamma_0 i\right] dt. 
	\end{equation}
	The cost is thus composed of three parts: the economic cost, the infected square cost, and the infected cost (see Section \ref{sec:results}).
The infected cost is
\begin{equation}
	\label{eq:lin}
	\int_0^T i dt =\frac{1}{\gamma} \int_0^\infty \dot{r} dt = \frac{1}{\gamma} (r(\infty)-r_0),
\end{equation}
where $r(\infty)$ follows by plugging $a=t^*$ in \eqref{eq:r_inf} and using \eqref{eq:explicit_part1}. For the infected squared cost we separate the two phases. For Phase I:
\begin{equation}
	\label{eq:T_2}
	\begin{aligned}
		&\int_0^{t^*} i^2 dt = i_0^2  \int_0^{t^*} e^{2(\rho-1)\gamma t}  dt = \frac{i_0^2 }{2(\rho-1)\gamma} \left[e^{2(\rho-1)\gamma t} \right]_0^{t^*} \\
		=& \frac{i_0^2 }{2(\rho-1)\gamma} \left(e^{2(\rho-1)\gamma t^*} -1\right) = \frac{1 }{2(\rho-1)\gamma} \left(i(t^*)^2 -i_0^2\right).
	\end{aligned}
\end{equation}
For Phase II, $L(t)=0$, hence, by Lemma \ref{prp:nc_square},
\begin{equation}
	\begin{aligned}
		\int_{t^*}^{\infty} i^2dt
		&=\gamma\frac{[\ln(s(t^*))]^2-[\ln(s(\infty))]^2}{2\beta^2}-\frac{s(t^*)-s(\infty)}{\beta} +\\ &+\big(s(t^*)+i(t^*)-\frac{\gamma}{\beta}\ln (s(t^*))\big)\frac{r(\infty)-r(t^*)}{\gamma},
	\end{aligned}
	\label{eq:i^2_A}
\end{equation}
where $s(\infty)=1-r(\infty)$. 	We let $u:=(1-L)^2 \in [0,1]$.
	We have economic cost only in Phase I when $u=\frac{\rho \gamma}{\beta s }$. Let
	\begin{equation*}\footnotesize
\begin{aligned}\footnotesize
	&T_1:=\int_0^{t^*} \sqrt{u}dt = \sqrt{\frac{\rho \gamma}{\beta }} \int_0^{t^*} \sqrt{\frac{1}{ s }} dt \\
	&=2\sqrt{\frac{\rho }{\beta (s_0+\nu i_0)(\rho-1)^2\gamma }}\Bigg(\at\left(\sqrt{\frac{s_0}{s_0+\nu i_0}}\right) -\\
	&- \at\left(\sqrt{1-\frac{\nu i(t^*)}{s_0+\nu i_0}}\right)    \Bigg)\\
	&=2\sqrt{\frac{\rho }{\beta (s_0+\nu i_0)(\rho-1)^2\gamma }}   \Bigg(     \at\left(\sqrt{\frac{s_0}{s_0+\nu i_0}}\right) - \\
	&-\at\left(\sqrt{\frac{ s(t^*)}{s_0+\nu i_0}}\right)    \Bigg) 
	\\
	&=2\sqrt{\frac{\rho }{\beta \tilde{\nu}(\rho-1)^2\gamma }}  { \left(     \at\left(\sqrt{\frac{s_0}{\tilde{\nu}}}\right) - \at\left(\sqrt{\frac{ \rho\gamma}{\beta\tilde{\nu}}}\right)    \right) }
\end{aligned}
\end{equation*}
where $\nu= \frac{\rho}{\rho-1}, \tilde{\nu}=(s_0+\nu i_0)$ and we used \eqref{eq:st*}.
Overall, the economic cost is
\begin{equation}
\int_0^\infty L dt =\int_0^{t^*} L dt = \int_0^{t^*} (1-\sqrt{u})dt = \left(t^*- T_1\right),
\label{eq:eco_cost_part1}
\end{equation}
where $t^*$ follows from \eqref{eq:explicit_part1}.
Plugging the economic cost \eqref{eq:eco_cost_part1}, the infected cost \eqref{eq:lin} and the infected square cost \eqref{eq:T_2} and \eqref{eq:i^2_A} in \eqref{eq:cost_parts}, we get \eqref{eq:t*}.
\subsection*{Proof of Proposition \ref{prpB}}
	$s(\tau_1)$ may be derived by \eqref{eq:constant} with $a=0$, $b=\tau_1$, i.e., it is solution of
	\begin{equation}
	\ln\Big(\frac{s(\tau_1)}{s_0}\Big)=\frac{\beta}{\gamma}[s(\tau_1)-s_0+\iota-i_0].
	\label{eq:s1}
	\end{equation}
	The control time $\Delta \tau:= \tau_2-\tau_1$ may be computed by using the fact that $i(t)=\iota$ for every $t \in [\tau_1,\tau_2]$, i.e.,
	\begin{equation}
		\tau_2-\tau_1=\frac{s(\tau_1)-s(\tau_2)}{\gamma \iota}=\frac{s(\tau_1)-\gamma/\beta}{\gamma \iota},
		\label{eq:delta_tau}
	\end{equation}
	From \cite[Theorem 1]{miclo2020optimal} and from our form of economic cost
\begin{equation}
	\label{eq:eco_w}
\begin{aligned}
	\int_0^\infty L dt &=\int_{\tau_1}^{\tau_2}\bigg[1-\sqrt{\frac{1}{1+(\tau_2-t)\beta\iota}}\bigg]dt\\
	&=\Big(\tau_2-\tau_1-\frac{2}{\beta \iota}(\sqrt{1+(\tau_2-\tau_1)\beta \iota}-1)\Big)\\
	&=\frac{1}{\iota}\bigg(\frac{s(\tau_1)}{\gamma}+\frac{1}{\beta}\Big(1-2\sqrt{\frac{s(\tau_1)\beta}{\gamma}}\Big)\bigg)\\
	&=\frac{1}{\iota}\bigg(\sqrt{\frac{s(\tau_1)}{\gamma}}-\sqrt{\frac{1}{\beta}}\bigg)^2.
\end{aligned}
\end{equation}
The infected cost is
	\begin{equation}
	\label{eq:linear_A}
	\int_0^\infty i(t)dt=\int_0^T\frac{\dot{r}(t)}{\gamma}dt=\frac{r(\infty)-r_0}{\gamma},
\end{equation}
where $r(\infty)$ follows by plugging $a=\tau_2$ in \eqref{eq:r_inf} with $i(t_2)=\iota$, $s(t_2)=\gamma/\beta$. For the infected square cost, for Phase I and III we use result in Lemma \ref{prp:nc_square}, while Phase II is simply $(\tau_2-\tau_1)\iota^2$. Hence, 
\begin{equation}
	\label{eq:i^2_W1}
	\begin{aligned}
		\int_0^{\infty} i^2 dt&= \gamma\frac{[\ln s_0]^2-[\ln s(\tau_1)]^2}{2\beta^2}-\frac{s_0-s(\tau_1)}{\beta} +\\ &+\big(s_0+i_0-\frac{\gamma}{\beta}\ln s_0\big)\frac{i_0+s_0-\iota-s(\tau_1)}{\gamma}+\\
		&+(\tau_2-\tau_1)\iota^2+\gamma\frac{[\ln \frac{\gamma}{\beta}]^2-[\ln s(\infty)]^2}{2\beta^2}-\frac{\frac{\gamma}{\beta}-s(\infty)}{\beta} +\\ &+\bigg(\frac{\gamma}{\beta}+\iota-\frac{\gamma}{\beta}\ln \frac{\gamma}{\beta}\bigg)\frac{r(\infty)-1+\iota+\frac{\gamma}{\beta}}{\gamma},
	\end{aligned}
\end{equation}
where $s(\tau_1)$ comes from \eqref{eq:s1}, $\tau_2-\tau_1$ from \eqref{eq:delta_tau}, and $s(\infty)=1-r(\infty)$. Summing infected square cost \eqref{eq:i^2_W1}, infected cost \eqref{eq:linear_A} and economic cost \eqref{eq:eco_w} we obtain \eqref{eq:cost_B}. 
\subsection*{Proof of Proposition \ref{prp:lb}}
	Let $z:=\beta s i (1-L)^2$, so that the dynamics is
	\begin{equation*}
		\dot{s}=-z, \quad
		\dot{i}=z -\gamma i, \quad
		\dot{r}=\gamma i.
	\end{equation*}
	Hence,
	\begin{equation*}
		c(t)=\Big(1-\sqrt{\frac{z}{\beta si}}\Big)+ \kappa \gamma_1 i^2  + \kappa\gamma_0 i.
	\end{equation*}
	We now show that
	\begin{equation}
		\label{eq:eps}
		1-\sqrt{\frac{z}{\beta si}} \ge \epsilon^2 \left(1- \frac{z}{\beta si}\right)^2,
	\end{equation}
	with $\epsilon = \sqrt{27/32}$, thus leading to
	\begin{equation*}
		c(t) \ge \bigg(\underbrace{\epsilon \Big(1- \frac{z}{\beta si}\Big)}_{c_1}\bigg)^2 + (\underbrace{\sqrt{\kappa \gamma_1}i}_{c_2})^2  + \underbrace{\kappa\gamma_0 i}_{c_3}.
	\end{equation*}
	Indeed, by defining $y:=\sqrt{z/(\beta s i)} \in [0,1]$, the largest $\epsilon$ satisfying \eqref{eq:eps} is
	$$
	\epsilon = \sqrt{\min_{y} \frac{1-\sqrt{y}}{(1-y)^2}}=\sqrt{\frac{27}{32}}.
	$$
	Hence,
	\begin{align*}
		\int_0^\infty c(t) \ge \int_0^\infty c_1^2(t)+c_2^2(t)+c_3(t)
	\end{align*}
	By assumption, it exists a time $t^*$ such that $s(t^*)=\frac{\gamma}{\beta}$. Hence,
	we can split the cost in two parts as follows:
	\begin{equation*}
		\int_0^\infty c_1^2(t)+c_2^2(t) = \int_0^{t^*} c_1^2(t)+c_2^2(t) + \int_{t^*}^\infty c_1^2(t)+c_2^2(t).
	\end{equation*}
	We start from the first part, i.e., $[0,t^*]$, using the fact that
	\begin{align*}
		\int_0^{t^*} c_1^2(t)+c_2^2(t) dt
		\ge  \int_0^{t^*} {2 c_1(t)c_2(t)} dt.
	\end{align*}
	For the second part, i.e., $[t^*,\infty]$, $L(t)=0$ by assumption (however, even if $L(t)>0$, this would be a lower bound), i.e.,
	\begin{equation*}
		\int_{t^*}^T c_1^2(t)+c_2^2(t) dt\ge \int_{t^*}^Tc_2^2(t) dt.
	\end{equation*}
	For convenience, we do not split $c_3$ in two parts.
	Putting all together, the total cost is
	\begin{equation}
		\int_0^T c(t) dt \ge 2 {U_{12}^I}+U_{22}^{II}+U_{3},
		\label{eq:cost_lower}
	\end{equation}
	where $U_{12}^I:=\int_0^{t^*}c_1(t)c_2(t)dt$, $U_{22}^{II}:=\int_{t^*}^{\infty} c_2^2(t)dt$, and $U_3:=\int_{0}^{\infty} c_3(t)dt$.
	We now lower bound all of these terms as function of $i(t^*)$, and then minimize over $i(t^*)$ (which is function of the control itself) to get a lower bound. To this end, we establish the following equivalences.
	Let $A=\int_{0}^{t^*} i(t) dt$, and $B=\int_{0}^{t^*} \frac{z(t)}{\beta s(t)}dt$.
	\begin{enumerate}
		\item From $\dot{i}= z-\gamma i$ we get
		\begin{align*}
			\int_0^{t^*} \dot{i}(t) dt &= \int_0^{t^*} [z(t)-\gamma i(t)] dt \\
			i(t^*)-i_0&= \int_0^{t^*} z(t)dt - \gamma  A.
		\end{align*}
		Hence
		\begin{align*}
			A&=\int_0^{t^*} i(t) dt = \frac{-i(t^*)+ i_0+\int_0^{t^*} z(t)dt}{\gamma} \\
			&=\frac{-i(t^*)+ i_0-\int_0^{t^*} \dot{s}(t)dt}{\gamma} \\
			&=\frac{-i(t^*)+ i_0-s(t^*)+s_0}{\gamma}. 
		\end{align*}
		\item  From $\dot{s}=-z$ we get
		\begin{align*}
			&B=\int_0^{t^*}\frac{z(t)}{\beta s(t)} dt =-\frac{1}{\beta} \int_0^{t^*}\frac{\dot{s}(t)}{ s(t)} dt  \\
			&-\frac{1}{\beta} \int_0^{t^*}\frac{\partial ln(s(t))}{\partial t} dt = -\frac{1}{\beta} (ln(s(t^*))- ln(s_0)).
		\end{align*}
		
	\end{enumerate}
	We can now lower bound all the terms appearing in the cost. We recall that $i_0$ and $s_0$ are given, and $s(t^*)=\gamma/\beta$.
	\begin{align*}
		U_{12}^I &=  \int_0^{t^*} {c_1(t)c_2(t)} dt \\
		&= \int_0^{t^*}  \epsilon \sqrt{\kappa\gamma_1}\left(1-\frac{z(t)}{\beta s(t)i(t)}\right)  i(t) dt \\
		&=\epsilon \sqrt{\kappa\gamma_1}\Big[ \underbrace{\int_0^{t^*}   i(t)dt}_A- \underbrace{\int_0^{t^*}\frac{z(t)}{\beta s(t)} dt}_B \Big].
	\end{align*}
	Thus, using 1 and 2,
	\begin{equation}
		\begin{aligned}
			U_{12}^I&= \epsilon \sqrt{\kappa\gamma_1} \left[ \frac{ -i({t^*})+ i_0-s({t^*})+s_0}{\gamma}\right. \\ & \qquad \left. +\frac{1}{\beta} (ln({s(t^*)})- ln(s_0))\right].
		\end{aligned}
		\label{eq:U_12^I}
	\end{equation}
	$U_{22}^{II}$ can be computed exactly under the assumption $L(t)=0$ for $t\ge t^*$. 
	Hence, by Lemma \ref{prp:nc_square}, with $a=t^*$ and $b=\infty$, $s(t^*)=\gamma/\beta$, $i(t^*)$ parametric, $s(\infty)=1-r(\infty)$ being the asymptotic equilibrium computed by \eqref{eq:r_inf}, we obtain
	\begin{equation}
		\begin{aligned}
			U_{22}^{II} &=
			\frac{\gamma}{\beta}\frac{[\ln{(\gamma/\beta)}]^2-[\ln(s(\infty))]^2}{2\beta}-\frac{\gamma/\beta-s(\infty)}{\beta} +\\ &+\big(\gamma/\beta+i(t^*)-\frac{\gamma}{\beta}\ln {\gamma/\beta}\big)\frac{i(t^*)+\gamma/\beta-s(\infty)}{\gamma}.
		\end{aligned}
		\label{eq:U2^II}
	\end{equation}
	Finally, for $U_3$:
	\begin{equation}
		U_3=\kappa\gamma_0\int_0^{\infty}i(t)dt= \frac{\kappa\gamma_0}{\gamma}(r(\infty)-r_0).
		\label{eq:U3}
	\end{equation}
	Plugging \eqref{eq:U_12^I}, \eqref{eq:U2^II} and \eqref{eq:U3} in \eqref{eq:cost_lower}, we obtain the lower bound of the cost as a function of $i(t^*)$. To obtain a lower bound on the cost we should then minimize over $i(t^*)$. The statement follows from noticing that $i(t^*)$ is upper bounded by the peak of the infection in case of no control, i.e., $i_{max} = i_0+s_0-\frac{\gamma}{\beta}\big(1-\ln\frac{\gamma}{\beta s_0}\big)$ \cite{hethcote2000mathematics}.

\end{document}